\renewcommand{\Mod}{\mathrm{Mod}}
\newcommand{\Perf}{\mathrm{Perf}}
\renewcommand{\Coh}{\mathrm{Coh}}
\title{Adjoints, wrapping, and morphisms at infinity}
\author{Tatsuki Kuwagaki and Vivek Shende}
\date{\today}
\begin{document}

\maketitle
\begin{abstract}
For a localization of a smooth proper category along a subcategory preserved by the Serre functor, we show that morphisms in Efimov's algebraizable categorical formal punctured neighborhood of infinity can be computed using the natural cone between right and left adjoints of the localization functor. In particular, this recovers the following result of Ganatra--Gao--Venkatesh: morphisms in categorical formal punctured neighborhoods of wrapped Fukaya categories are computed by Rabinowitz wrapping.
\end{abstract}

To any dg category $\mathcal{S}$ over a field $\bK$, 
Efimov has associated an ``algebraizable 
categorical formal punctured neighborhood of infinity" \cite{Efimov}:
$$\cS \to \widehat{\cS}_\infty$$
We are interested here in the case when $\cS$ admits
a localization sequence 
\begin{equation} \label{seq} 
    0 \to \cK \xrightarrow{j} \cC \xrightarrow{i^L} \cS \to 0
\end{equation}
where $\cC$ is smooth (perfect diagonal bimodule) and locally proper (finite dimensional Hom spaces). 

In this case, Efimov showed that $\widehat{\cS}_\infty$ can be computed
as follows.  To any category $\cT$ we may associate its `pseudo-perfect
modules' $\cT^{pp} = \Hom(\cT, \Perf\, (\bK))$.  
Since $\cK$ is locally proper, the Yoneda embedding gives
$\cK \hookrightarrow  \cK^{pp}$.  Form the quotient: 
\begin{equation} \mathrm{Perf}_{top}(\widehat{\cS}_\infty):= \cK^{pp} /\cK
\end{equation}
The composition of the Yoneda functor with passage to 
the quotient gives a map 
$$\mathcal{C} \to \Hom(\cK, \mathrm{Perf}(\bK))/\cK$$
This map evidently factors through $\cS$, and 
$\widehat{\cS}_\infty$ 
is
the full subcategory of 
$\mathrm{Perf}_{top}(\widehat{\cS}_\infty)$ 
generated by the image of  
$\cS$, or equivalently $\cC$.  

As always with quotient categories, 
it is not easy to compute morphism spaces directly from the definition. 
Our purpose here is to give a more explicit formula for morphisms in $\widehat{\cS}_\infty$, under the additional assumption that 
the Serre functor of $\cC$ preserves $\cK$.
Our result  
is inspired by, and implies, a result of 
Gao-Ganatra-Venkatesh in the situation where $\cS$ is the Fukaya category of a Weinstein manifold \cite{GGV}.

\begin{theorem} \label{rab formula}
Assume given a sequence as in \eqref{seq}, such that $\cC$ is smooth and locally proper, and the Serre functor of $\cC$ preserves $\cK$. 
Let $i:\Mod\, \cS \to \Mod\, \cC$ be 
the pullback functor on module categories. 
Then for $c, d \in \cC$, there
is a natural isomorphism
$$\Hom_{\widehat \cS_\infty}(c, d) 
= \Cone(\Hom_{\Mod\, \cC}(ii^L(c), d)\rightarrow \Hom_{\Mod\, \cC}(c, ii^L(d)))$$
where the map is induced by the 
unit maps 
$c \to ii^L(c)$ and $d \to ii^L(d)$. 
\end{theorem}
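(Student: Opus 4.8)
The plan is to unwind the definition of $\widehat{\cS}_\infty$, reformulate the cone on the right-hand side through the adjunctions attached to the localization, and then match the two. Throughout I work in $\Mod\,\cC$, which carries the recollement associated to \eqref{seq}: writing $j\colon \cK\hookrightarrow \cC$ for the inclusion, restriction $j^*\colon \Mod\,\cC\to\Mod\,\cK$ has a fully faithful left adjoint $j_!$ and fully faithful right adjoint $j_*$, while the pullback $i\colon\Mod\,\cS\to\Mod\,\cC$ is fully faithful onto $\{M : j^*M=0\}$ with left adjoint $i^L$ (the module-level version of the functor in \eqref{seq}) and right adjoint $i^R$. The two basic triangles are $j_!j^*M\to M\to ii^LM$ and $ii^RM\to M\to j_*j^*M$. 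By definition $\Hom_{\widehat \cS_\infty}(c,d)=\Hom_{\cK^{pp}/\cK}(j^*h_c,j^*h_d)$, where $h_c,h_d$ are the representable $\cC$-modules and $\cK$ denotes the thick subcategory of $\cK^{pp}$ generated by representables; note $j^*h_c=\Hom_\cC(j(-),c)$ is pseudo-perfect by local properness.

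The first real step is to rewrite the right-hand side. Since $ii^Lc=i(i^Lc)$ lies in the image of $i$, the adjunctions give $\Hom_{\Mod\,\cC}(ii^Lc,d)=\Hom_{\Mod\,\cS}(i^Lc,i^Rd)$ and $\Hom_{\Mod\,\cC}(c,ii^Ld)=\Hom_{\Mod\,\cS}(i^Lc,i^Ld)$. A diagram chase with the unit maps identifies the comparison map with postcomposition by the canonical natural transformation $\nu\colon i^R\to i^L$, so that the cone in the theorem becomes $\Hom_{\Mod\,\cS}(i^Lc,\Cone(\nu_d))$. Feeding the two triangles into an octahedron on $ii^Rh_d\to h_d\to ii^Lh_d$ identifies $i\,\Cone(\nu_d)$ with $\Cone(j_!j^*h_d\to j_*j^*h_d)$, the cone of the canonical map $j_!\to j_*$; this is the Rabinowitz/Tate object, and the resulting formula is exactly the ``Hom into the Rabinowitz completion'' of Ganatra--Gao--Venkatesh. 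As a sanity check, when $\cK$ is smooth and proper $\nu$ is an equivalence and both sides vanish, consistently with $\cK^{pp}=\Perf\,\cK$ collapsing the quotient.

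It remains to compute the left-hand side and match it with $\Hom_{\Mod\,\cS}(i^Lc,\Cone\nu_d)$. I would compute $\Hom_{\cK^{pp}/\cK}(j^*h_c,j^*h_d)$ by the calculus of fractions: inverting the maps $j^*h_d\to Y$ with perfect cone presents it as $\Cone\big(\mathrm{colim}_{P\to j^*h_d}\Hom_{\Mod\,\cK}(j^*h_c,P)\to \Hom_{\Mod\,\cK}(j^*h_c,j^*h_d)\big)$, the colimit ranging over perfect $P\to j^*h_d$. Equivalently, transporting along the fully faithful $j_!$ (which sends representable $\cK$-modules to $\mathrm{thick}(\cK)\subset \Perf\,\cC$ and satisfies $j^*j_!\cong\mathrm{id}$) reduces the quotient Hom to one in $\Mod\,\cC$ modulo $\mathrm{thick}(\cK)$ between $j_!j^*h_c$ and $j_!j^*h_d$. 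The aim is to recognize this colimit as the passage from the $j_!$-completion to the $j_*$-completion of $j^*h_d$, i.e. as pairing $i^Lc$ against $\Cone(j_!j^*h_d\to j_*j^*h_d)$.

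The main obstacle is precisely this last identification: showing that the filtered colimit produced by the fraction calculus converges to, and is identified with, the adjoint cone $\Hom_{\Mod\,\cS}(i^Lc,\Cone\nu_d)$ rather than to a naive Hom. This is where the hypotheses enter decisively: smoothness of $\cC$ guarantees the existence of $i^R$ (equivalently, that the diagonal is perfect, so $j_!\to j_*$ has controlled cone), while local properness guarantees that $j^*h_c$ is pseudo-perfect, providing the finiteness and duality needed for the colimit to stabilize into the cone instead of diverging. I would isolate this as a lemma comparing the two natural completions of $j^*h_d$, and I expect the bookkeeping of the fraction calculus against the six functors --- tracking which adjoint produces which term --- to be the technical heart of the argument.
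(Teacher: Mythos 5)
Your reformulation of the right-hand side is correct, but it is the easy half of the theorem (it is essentially the paper's own remark following the statement): under the adjunctions $i^L \dashv i \dashv i^R$ the cone becomes $\Hom_{\Mod\,\cS}(i^Lc, \Cone(i^Rd \to i^Ld))$, and the recollement octahedron identifies $i\,\Cone(i^Rd\to i^Ld)$ with $\Cone(j_!j^*h_d \to j_*j^*h_d)$. The theorem, however, asserts that this cone computes $\Hom_{\cK^{pp}/\cK}(j^*h_c, j^*h_d)$, and your proposal stops exactly where that identification has to be proved: you yourself label it ``the main obstacle'' and ``the technical heart,'' and defer it to a lemma you neither state nor prove. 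As written, this is a plan for a proof, not a proof, and the entire passage from the definition of $\widehat{\cS}_\infty$ to the cone formula is missing.

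Beyond incompleteness, two steps of the plan are shaky. First, ``transporting along the fully faithful $j_!$'' to move the quotient computation into $\Mod\,\cC$ modulo $\mathrm{thick}(\cK)$ is not innocent: a fully faithful functor between ambient categories does not in general induce a fully faithful functor between Verdier/Drinfeld quotients, so this transport is itself a claim requiring proof. (The paper handles the analogous point by exhibiting the comparison functor $[j_R]$ as a restriction of the equivalence $\Mod\,\cC/\Mod\,\cS \cong \Mod\,\cK$, which gives full faithfulness, and only then passing to the further quotient by $\cK$.) Second, your expectation that smoothness and a ``duality'' argument are needed to make the fraction-calculus colimit converge misidentifies the mechanism: the key input is a one-line compactness argument. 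Since $c \in \cC$ is compact in $\Mod\,\cC$ and $j$ preserves colimits, any map $c \to jj^R(d)$ factors through an object of $\cK$, hence $\Hom_{\Mod\,\cC/\cK}(c, jj^R(d)) = 0$; together with $\Hom_{\Mod\,\cC}(ii^L(c), \cK) = 0$ this shows that all the relevant Homs in $\Mod\,\cC/\cK$ are already computed in $\Mod\,\cC$, e.g.\ $\Hom_{\Mod\,\cC/\cK}(ii^L(c),d) \cong \Hom_{\Mod\,\cC}(ii^L(c),d)$ and $\Hom_{\Mod\,\cC/\cK}(c,d) \cong \Hom_{\Mod\,\cC}(c,ii^L(d))$. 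With these vanishing/identification statements in hand, the triangle $jj^R \to \id \to ii^L$ applied in both variables finishes the computation of $\Hom_{\Mod\,\cC/\cK}(jj^R(c), jj^R(d))$; this is the content your proposal lacks.
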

\begin{remark}
The map $i$ also has a right adjoint $i^R$; 
 we can also express the formula as 
$\Hom_{\widehat \cS_\infty}(c, d) = \Hom_{\Mod\, \cC}(c, \Cone(ii^R(d) \to ii^L(d)))$. 
\end{remark}
\begin{remark}
It may be nontrivial to express
compositions in $\widehat{S}_\infty$ in terms
of the formula above.  We give an expression at the level of cohomology in Appendix \ref{compositions}.
\end{remark}

We will give the proof of this theorem after illustrating in algebraic and symplectic geometry: 

\begin{example}[Coherent sheaves]
Let $Y$ be a smooth proper
algebraic variety, and $X \subset Y$ an open subvariety with complement $Z$. Then 
$\Coh(Y)$ is smooth and proper, 
and one has 
$$\Coh(X) = \Coh(Y) / \Coh_Z(Y),$$ where $\Coh_Z(Y)$ is the 
full subcategory on sheaves set-theoretically supported on $Z$. The Serre functor of $\Coh(Y)$ obviously preserves $\Coh_Z(Y)$.
Writing $x: X \to Y$ for the inclusion, 
our result asserts that 
given $E, F \in \Coh(Y)$, 
$$\Hom_{\widehat{\Coh(X)}_\infty}(E, F) = \Cone(\Hom_{Q \Coh(Y)}(x_* x^* E, F) \to \Hom_{Q \Coh(Y)}(E, x_* x^* F))$$
Note we may compute this cone of Homs
after restricting to any Zariski
neighborhood of $Z$, since $x_* x^*E \to E$
and $x_* x^*F \to F$
are isomorphisms away from such neighborhood. 

Let us do an example of 
the example.  We take $Y = \bP^1$, 
$X = \bP^1 \setminus 0$, and $E = F = \mathcal{O}$.  In the Zariski
chart $\bP^1 \setminus \infty$,  we  compute:  

$$\Cone(\Hom_{\bK[t]}(\bK[t, t^{-1}], 
\bK[t]) \to \Hom_{\bK[t]}(\bK[t], \bK[t, t^{-1}])) \cong \bK((t))$$

Indeed, the second term in the cone 
is obviously $\bK[t, t^{-1}]$.  
One can show that the first is in fact isomorphic to 
 $(\bK[[t]]/\bK[t]) [-1]$; 
  we include a calculation in Appendix \ref{ridiculous hom calculation}.  We leave it to the reader to check that 
 the cone realizes the nontrivial extension
 $$ 0 \to \bK[t, t^{-1}] \to \bK((t)) \to \bK[[t]]/\bK[t] \to 0.$$
\end{example}


Before going to the next example, we note the following lemma.
\begin{lemma}\label{lem:LeftCYandSerre}
Let $\cB$ be a smooth proper category, and $f\colon \cA\rightarrow \cB$ be a left relative Calabi--Yau structure. Then the Serre functor of $\cB$ preserves the image of the triangulated hull of $\cA$.
\end{lemma}
\begin{proof}
By the definition of left relative CY structure~\cite[(4.10)]{BravDyckerhoff}, we have an exact sequence
$$ \cB^!\rightarrow f_!\cA[-n+1]\rightarrow \cB[-n+1]$$
in the category of $\cB^e=\cB\otimes_k\cB$-bimodules. Here $\cB^!$ is the dualizing module, $n$ is the dimension of the CY structure. Since $\cB$ is smooth proper, $\otimes \cB^!$ is the inverse Serre functor. As a result, the sequence says that the Serre functor differs from the identity by something from $\cA$. Hence the images of objects of $\cA$ are preserved by the Serre functor of $\cB$.
\end{proof}

\begin{example}[Fukaya categories]
Let $W$ be a Weinstein symplectic manifold and 
$\Lambda \subset \partial_\infty W$ a generically Legendrian 
total stop, such as the core of a fiber of an open book decomposition of $\partial_\infty W$, e.g. as for a Lefschetz fibration on $W$ in the sense of Seidel. 
Then \cite{GPS2}
the (partially) wrapped Fukaya category 
$\mathrm{Fuk}(W, \Lambda)$ is smooth and proper, 
and we have a localization sequence 
\begin{equation} \label{localization-fuk}
0 \to \langle D_{\Lambda} \rangle \to \mathrm{Fuk}(W, \Lambda) \to \mathrm{Fuk}(W) \to 0
\end{equation}
where $D_{\Lambda}$ are the so-called linking disks to $\Lambda$. 

We claim that $\langle D_\Lambda \rangle$ is preserved by the Serre functor of $\mathrm{Fuk}(W, \Lambda)$.  In the case where $\Lambda$ is the core of fiber of a Lefschetz fibration, this follows from Seidel's result that the (inverse) Serre functor on $\mathrm{Fuk}(W, \Lambda)$ acts  by ``wrapping once'' \cite{Seidel-SHasHH}, which evidently preserves the $D_\Lambda$.\footnote{In \cite{Seidel-SHasHH} this was asserted as a conjecture, and was proved in \cite[Eq. (7.63)]{Seidel-LefschetzII} for Seidel's definition of Fukaya--Seidel categories.  Strictly speaking, Seidel's setup differs from that in \cite{GPS2} in terms of the asymptotic conditions imposed at infinity; a detailed account of the isotopies needed to check that the approaches are equivalent
can be found in \cite{Jeffs}.}  Note that any Weinstein manifold can be presented as a Lefschetz fibration \cite{Giroux-Pardon}, hence equipped with such a stop. 

More generally, for any (say Whitney stratifiable) Legendrian total stop $\Lambda$, while we do not know an explicit description of the Serre functor of $\mathrm{Fuk}(W, \Lambda)$, we can nevertheless check that it preserves $\langle D_\Lambda \rangle$.  First we interpret  $\mathrm{Fuk}(W, \Lambda)$ with a category of microsheaves $\mathrm{\mu sh}(W,\Lambda)$ through \cite{GPS3}. We have a left relative CY structure on $\mathrm{\mu sh}(\Lambda) \to \mathrm{\mu sh}(W,\Lambda)$ by \cite{Shende-Takeda}. Then Lemma~\ref{lem:LeftCYandSerre} implies what we want.

Thus, we may apply Theorem \ref{rab formula}.  Let us see what it yields.
Suppose given a Lagrangian $M \in \mathrm{Fuk}(W, \Lambda)$.  As in \cite{GPS2}, by a  {\em negative wrapping} $M \rightsquigarrow M^-$, we mean 
an isotopy induced by a Hamiltonian 
which is linear and negative at contact infinity.  So long as $M^-$ avoids $\Lambda$ and hence defines an element of $\mathrm{Fuk}(W, \Lambda)$, there is a continuation morphism $M \to M^-$. 
Essentially by definition,\footnote{Or see \cite[Lemma 3.12]{GPS2} for a detailed argument in the equivalent version where the wrapping is done positively in the first factor, rather than negatively in the second.} 
$$\Hom_{\mathrm{Fuk}(W)}(\,\cdot\, , M) = 
\lim_{\substack{\longrightarrow \\ M \to M^-}} \!\!
\Hom_{\mathrm{Fuk}(W, \Lambda)}(\,\cdot\, , M^-)
= 
\Hom_{\Mod \, \mathrm{Fuk}(W)}(\,\cdot\,, \!\! \lim_{\substack{\longrightarrow \\ M \to M^-}} \!\! M^-).$$

In other words, there is a natural isomorphism 
$$ii^L(M) \cong \lim_{\substack{\longrightarrow \\ M \to M^-}} \!\!  M^-$$
We conclude:

\begin{eqnarray*}
    \Hom_{\widehat{\mathrm{Fuk}(W, \Lambda')}_\infty } (L, M) & = & \Cone(\Hom_{\mathrm{Fuk}(W, \Lambda)}(\!\lim_{\substack{\longrightarrow \\ L \to L^-}} \!\! L^-, M) \to \Hom_{\mathrm{Fuk}(W, \Lambda)}(L, \!\!\lim_{\substack{\longrightarrow \\ M \to M^-}} \!\! M^-)  ) \\
& = &
\Cone(\!\lim_{\substack{\longleftarrow \\ L \to L^-}} \Hom_{\mathrm{Fuk}(W, \Lambda)}(L^-, M) \to \!\lim_{\substack{\longrightarrow \\ M \to M^-}} \Hom_{\mathrm{Fuk}(W, \Lambda)}(L, M^-)) 
\end{eqnarray*}
This recovers a  result originally proven in \cite[Theorem 1.1(2)]{GGV}.
\end{example}

\vspace{2mm}
The remainder of this note concerns the proof of Theorem \ref{rab formula}. 

\vspace{2mm}
We have the diagram: 

$$
\begin{tikzcd}
    \Mod\,\cK \arrow[rr, bend left = 35, "j^{RR}"]   
    \arrow[rr, bend right = 35, "j", swap] 
    & & \Mod\, \cC  \arrow[rr, bend left = 35, "i^R"] \arrow[ll, "j^R", swap]  
    \arrow[rr, bend right = 35, "i^L", swap]
    & &  \Mod\, \cS \arrow[ll, "i", swap] 
\end{tikzcd}
$$

Here, $j^R$ and $i$ are the natural pullback of modules under the identification of ind- and module-categories.  These each have right
and left adjoints, and the left adjoints compose with the Yoneda embeddings
to give the original $j$ and $i^L$:

We note some properties of this diagram. 
The maps $i, j, j^{RR}$ are fully faithful; we have $j^R j = 1_{\Mod\, \cK} = j^R j^{RR}$
and $i^L i = 1_{\Mod\, \cS} = i^R i$. 

\begin{lemma}\label{iRKvanishing}
$i^R(\cK)=0$.
\end{lemma}
\begin{proof}
For $\cE\in \cK$ and $\cF\in \Mod\, \cS$, we have
\begin{equation}
\Hom_{\Mod\, \cS}(\cF, i^R(\cE))\cong \Hom_{\Mod\, \cC}(i(\cF), \cE).
\end{equation}
We have $\cF_i\in \cC$ such that $\displaystyle{\lim_{\substack{\longrightarrow \\ i}}\cF_i=i(\cF)}$. 
\begin{equation}
\begin{split}
\Hom_{\Mod\, \cC}(i(\cF), \cE)&\cong \lim_{\substack{\longleftarrow \\ i}}\Hom_{\Mod\, \cC}(\cF_i, \cE)\\
&\cong \lim_{\substack{\longleftarrow \\ i}}\Hom_{\bK}(\Hom_{\Mod\, \cC}(\Phi^{-1}(\cE), \cF_i),\bK)\\
&\cong \Hom_{\bK}(  \lim_{\substack{\longrightarrow \\ i}}\Hom_{\Mod\, \cC}(\Phi^{-1}(\cE), \cF_i),\bK)\\
&\cong \Hom_{\bK}(\Hom_{\Mod\, \cC}(\Phi^{-1}(\cE), \lim_{\substack{\longrightarrow \\ i}}\cF_i),\bK)\\
&\cong \Hom_{\bK}(\Hom_{\Mod\, \cC}(\Phi^{-1}(\cE), i(\cF)),\bK)=0.
\end{split}
\end{equation}
In the last equality, we used the ansatz $\Phi$ preserves $\cK$. This completes the proof.
\end{proof}

We will later be interested in the Drinfeld-Verdier quotient $(\Mod \, \cC) / \cK$.  (Note this differs from
$\Mod \, \cC / \Mod\, \cK = \Mod \, \cS$.)
It will be useful that certain
morphisms can already be computed in $\cC$:

\begin{lemma}
For any $c, d \in \cC$, 
\begin{equation} \label{indckL_}
    \Hom_{\Mod \cC/\cK}(ii^L(c),d)\cong \Hom_{\Mod \cC}(ii^L(c), d)\cong \Hom_{\Mod \cC}(c,ii^R(d)).
\end{equation}
and
\begin{equation}  \label{indckLL}
    \Hom_{\Mod \cC/\cK}(ii^L(c),ii^L(d))\cong \Hom_{\Mod \cC}(ii^L(c),ii^L(d))\cong \Hom_{\Mod \cC}(c,ii^L(d)).
\end{equation}
Additionally, 
\begin{equation} \label{indck_L}
    \Hom_{\Mod \cC/\cK}(c,ii^L(d))\cong \Hom_{\Mod \cC}(c,ii^L(d))
\end{equation}
and
\begin{equation} \label{indck__}
\Hom_{\Mod \cC/\cK}(c,d)\cong \Hom_{\Mod \cC}(c,ii^L(d)).
\end{equation}
\end{lemma}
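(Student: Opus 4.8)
\emph{The plan is to} reduce everything to three inputs: the recollement that $\Mod\cC$ carries with respect to $\Mod\cK$ and $\Mod\cS$, the resulting orthogonality of $\mathrm{im}(i)$, and the colimit formula for morphism complexes in a Drinfeld–Verdier quotient. \emph{First I would} record the recollement: from the adjunctions $i^L\dashv i\dashv i^R$ and $j\dashv j^R\dashv j^{RR}$, the full faithfulness of $i,j,j^{RR}$, and the identities $i^Li=i^Ri=1$, $j^Rj=j^Rj^{RR}=1$, one gets the functorial triangles
\[
jj^R X\to X\to ii^L X\xrightarrow{+1},\qquad ii^R X\to X\to j^{RR}j^R X\xrightarrow{+1},
\]
together with the identification $\mathrm{im}(i)=\ker(j^R)$, which is precisely the right-orthogonal complement in $\Mod\cC$ of $\mathrm{im}(j)$, hence of the thick subcategory $\langle\cK\rangle=\Perf\cK\subset\Mod\cC$ generated by $\cK$; in particular $ii^L(d)$ and $ii^R(d)$ lie in $\mathrm{im}(i)$ and so are $\Perf\cK$-right-orthogonal. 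The ``$\Mod\cC$-only'' identities are then purely formal: $\Hom_{\Mod\cC}(ii^L c,d)\cong\Hom_{\Mod\cS}(i^Lc,i^Rd)\cong\Hom_{\Mod\cC}(c,ii^R d)$ by $i\dashv i^R$ and then $i^L\dashv i$, and $\Hom_{\Mod\cC}(c,ii^L d)\cong\Hom_{\Mod\cS}(i^Lc,i^Ld)\cong\Hom_{\Mod\cC}(ii^L c,ii^L d)$ by $i^L\dashv i$ and full faithfulness of $i$ — equivalently because $c\to ii^L c$ has cone in $\mathrm{im}(j)$, which is annihilated upon mapping into $ii^L d\in\mathrm{im}(i)$.

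\emph{Next I would} bring in the quotient: for the Drinfeld–Verdier quotient by the \emph{small} category $\cK$, $\Hom_{\Mod\cC/\cK}(X,Y)$ is the filtered colimit of $\Hom_{\Mod\cC}(X,Y')$ over maps $Y\to Y'$ with cone in $\Perf\cK$ (equivalently of $\Hom_{\Mod\cC}(X',Y)$ over $X'\to X$ with such cone). Whenever one of the two arguments is $\Perf\cK$-orthogonal on the appropriate side, every transition map in this system is an isomorphism, so the quotient does not change the $\Hom$; since $ii^L(d)$ is $\Perf\cK$-right-orthogonal, this gives \eqref{indck_L} and the first isomorphism of \eqref{indckLL} at once, and, via the formal identities above, the corresponding statement in \eqref{indckL_}.

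\emph{The substantive point}, and the step I expect to cost the most, is \eqref{indck__}: here neither argument is visibly local, so one must actually compare the Drinfeld–Verdier quotient by the small $\cK$ with the Bousfield localization $\Mod\cS=\Mod\cC/\Mod\cK$ — which, as the text emphasizes, genuinely differ. The plan is to show that the unit map $d\to ii^L(d)$ becomes invertible in $\Mod\cC/\cK$, i.e.\ that $jj^R(d)\in\Perf\cK$; since $j=j_!$ identifies $\Perf\cK$ with $\mathrm{im}(j)\cap\Perf\cC$ (a $j_!$-module is compact over $\cC$ exactly when the underlying $\cK$-module is perfect), this reduces to the statement that $jj^R(d)$ — equivalently $ii^L(d)=\Cone(jj^R(d)\to d)$ — is a perfect $\cC$-module, which is exactly where one uses $d\in\cC$ together with smoothness and local properness of $\cC$. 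Granting that, $d\cong ii^L(d)$ in $\Mod\cC/\cK$, so \eqref{indck__} reduces to \eqref{indck_L}; alternatively and equivalently, one may phrase this via the Thomason–Neeman localization theorem, which makes the canonical functor $\Perf\cC/\Perf\cK\to\Perf\cS$ fully faithful and hence lets one compute $\Hom_{\Mod\cC/\cK}(c,d)$ inside $\Perf\cS$ as $\Hom_{\Mod\cS}(i^Lc,i^Ld)\cong\Hom_{\Mod\cC}(c,ii^L d)$. Once \eqref{indck__} is in hand the remaining identifications follow by stitching it together with the formal identities and the orthogonality statement; everything outside this perfectness/comparison point is bookkeeping with the two recollement triangles and the colimit formula.
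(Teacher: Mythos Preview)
Your treatment of \eqref{indckL_}, \eqref{indckLL}, and \eqref{indck_L} is correct and essentially what the paper does: the paper phrases the orthogonality as a roof-diagram argument (any roof collapses because $\Hom_{\Mod\cC}(ii^L(c),\Cone(g))=0$, respectively $\Hom_{\Mod\cC}(\Cone(f),ii^L(d))=0$), but this is exactly your observation that $\mathrm{im}(i)$ is right-orthogonal to $\Perf\cK$, so the transition maps in the Drinfeld colimit are isomorphisms.

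The gap is in your primary plan for \eqref{indck__}. The claim that $jj^R(d)$ (equivalently $ii^L(d)$) is a perfect $\cC$-module is false, and smoothness plus local properness of $\cC$ does not rescue it. Already in the paper's running example $\cC=\Coh(\bP^1)$, $\cK=\Coh_0(\bP^1)$, $d=\cO$, one has $ii^L(\cO)=x_*\cO_{\bP^1\setminus 0}$, which is not coherent; correspondingly $j^R(\cO)\in\cK^{pp}$ but $j^R(\cO)\notin\Perf\cK$, so $d\to ii^L(d)$ is \emph{not} an isomorphism in $\Mod\cC/\cK$. What the paper actually proves is the weaker statement $\Hom_{\Mod\cC/\cK}(c,jj^R(d))=0$, and the mechanism is compactness of $c$ rather than perfectness of $jj^R(d)$: writing $j^R(d)=\varinjlim d_i$ with $d_i\in\cK$, one gets $\Hom_{\Mod\cC}(c,jj^R(d))\cong\varinjlim\Hom_{\Mod\cC}(c,d_i)$, so every map $c\to jj^R(d)$ factors through some object of $\cK$ and hence dies in the quotient. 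Your alternative route does work, but not for the reason you emphasize: once you note that for $c,d\in\cC$ the roofs in $\Mod\cC/\cK$ stay inside $\cC$, you have $\Hom_{\Mod\cC/\cK}(c,d)=\Hom_{\cC/\cK}(c,d)$, and then the \emph{hypothesis} that \eqref{seq} is a localization sequence already gives $\cC/\cK\simeq\cS$ on Hom-spaces, whence $\Hom_{\cS}(i^Lc,i^Ld)\cong\Hom_{\Mod\cC}(c,ii^Ld)$; invoking Thomason--Neeman here is circular, since that theorem is precisely what underlies the localization hypothesis.
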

\begin{proof}
     A morphism in $\Hom_{\Mod \cC/\cK}(ii^L(c),d)$ is given by a roof diagram
\begin{equation}
ii^L(c)\xrightarrow{f}c'\xleftarrow{g} d
\end{equation}
such that $\Cone(g)\in \cK$. Since $\Hom_{\Mod \cC}(ii^L(c), \Cone(g))=0$ by Lemma~\ref{iRKvanishing}, $f$ is induced by a morphism $ii^L(c)\rightarrow d$. 
This shows \eqref{indckL_}.  Now 
\eqref{indckLL} follows from $i i^R i i^L = i i^L$.  

Similarly, take a morphism in $\Hom_{\Mod \cC/\cK}(c, ii^L(d))$. Then it is given by a roof diagram
\begin{equation}
    c\xleftarrow{f} c'\xrightarrow{g} ii^L(d)
\end{equation}
such that $\Cone(f)\in \cK$. Since $\Hom_{\Mod \cC}(\Cone(f), ii^L(d))=0$, $g$ is induced by a morphism $c\rightarrow ii^L(d)$.
This establishes \eqref{indck_L}.

Finally, 
since $j^R(d)\in \Mod\cK$, we have $d_i\in \cK$ such that $\displaystyle{\lim_{\substack{\longrightarrow \\ i}}}\,d_i=j^R(d)$. Since $j$ is colimit preserving and $c$ is compact, we have
\begin{equation}
    \Hom_{\Mod\cC}(c,jj^R(d))\cong \lim_{\substack{\longrightarrow \\ i}}\Hom_{\Mod\cC}(c,d_i).
\end{equation}
Take any morphism $f\in  \Hom_{\Mod\cC}(c,jj^R(d))$.  The above isomorphism implies  $f$ factors through $d_i\in \cK$ for some sufficiently large $i$. This implies  $\Hom_{\Mod\cC/\cK}(c,jj^R(d))\cong 0$. Applying this result to the triangle
\begin{equation}
    \Hom_{\Mod\cC/\cK}(c,d)\rightarrow  \Hom_{\Mod\cC/\cK}(c,ii^L(d))\rightarrow  \Hom_{\Mod\cC/\cK}(c,jj^R(d))\rightarrow,
\end{equation}
we get \eqref{indck__}.
\end{proof}

\begin{lemma} \label{ppexact}
Given an exact sequence as in \eqref{seq}, 
 the restrictions of $i$ and $j^R$ to pseudo-perfect modules have 
 the following properties: 
\begin{itemize}
\item $i: \cS^{pp} \to \cC^{pp}$ is fully faithful;
\item the image of $i$ is the kernel of 
$j^R$.
\end{itemize} 
\end{lemma}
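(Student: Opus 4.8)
The plan is to read off both statements from the recollement on module categories induced by the localization sequence \eqref{seq}, and then to check that it refines to the pseudo-perfect subcategories because the functors involved are pullbacks along functors of small categories (so preserve perfect values) and $i^L\colon\cC\to\cS$ is moreover surjective on objects. The first bullet costs essentially nothing: $i\colon\Mod\,\cS\to\Mod\,\cC$ is pullback along the quotient functor $i^L\colon\cC\to\cS$, so $i(N)(c)\simeq N(i^L c)$ and hence $i(N)\in\cC^{pp}$ whenever $N\in\cS^{pp}$; thus $i$ restricts to a functor $\cS^{pp}\to\cC^{pp}$. Since $i$ is fully faithful on all of $\Mod\,\cS$ (already noted) and $\cS^{pp}\subseteq\Mod\,\cS$, $\cC^{pp}\subseteq\Mod\,\cC$ are full subcategories, this restriction is fully faithful.

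For the second bullet, the inclusion $i(\cS^{pp})\subseteq\ker(j^R|_{\cC^{pp}})$ is formal: the composite $\cK\xrightarrow{j}\cC\xrightarrow{i^L}\cS$ sends each object of $\cK$ to a zero object, so $j^R\circ i=0$, and in particular $j^R$ annihilates $i(\cS^{pp})$. For the reverse inclusion I would invoke the recollement on module categories attached to \eqref{seq}: there is a gluing triangle $jj^R M\to M\to ii^L M\xrightarrow{[1]}$ for every $M\in\Mod\,\cC$, equivalently $\mathrm{im}(i)=\ker(j^R)$ inside $\Mod\,\cC$ (i.e.\ \eqref{seq} induces a localization sequence $0\to\Mod\,\cS\xrightarrow{i}\Mod\,\cC\xrightarrow{j^R}\Mod\,\cK\to 0$). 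Given $M\in\cC^{pp}$ with $j^R M=0$, we then have $jj^R M=0$, so the unit $M\to ii^L M$ is an isomorphism; setting $N:=i^L M$ we get $M\cong i(N)$.

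It remains to see that $N=i^L M$ lies in $\cS^{pp}$, and this is the one point where the hypotheses are genuinely used. Because $\cS$ is the Drinfeld quotient $\cC/\cK$, the functor $i^L\colon\cC\to\cS$ is the identity on objects, so every object of $\cS$ has the form $i^L(c)$; and from $M\cong ii^L M$ we get $N(i^L c)\simeq M(c)\in\Perf\,\bK$ for every $c\in\cC$ (if $\cS$ is taken idempotent-complete, use in addition that $\Perf\,\bK$ is closed under retracts). Hence $N\in\cS^{pp}$ and $M\cong i(N)\in i(\cS^{pp})$, which finishes the second bullet.

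I expect the main obstacle to be the recollement input $\mathrm{im}(i)=\ker(j^R)$ at the level of the large module categories — correctly identifying the various adjoints and the gluing triangle coming from \eqref{seq} and citing or proving that it is a recollement — rather than anything concerning pseudo-perfectness, which then reduces to the bookkeeping that $i$ and $i^L$ are pullbacks along functors of small categories and that $i^L$ is surjective on objects.
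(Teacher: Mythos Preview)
Your argument is correct. The paper's own proof of the second bullet is a one-line universal-property computation: since $\cS=\cC/\cK$ is the pushout $\cC\amalg_{\cK}0$ in dg categories, applying $\Hom(-,\Perf\,\bK)$ turns this into the pullback $\cS^{pp}=\cC^{pp}\times_{\cK^{pp}}0$, which \emph{is} the statement that $i$ identifies $\cS^{pp}$ with $\ker(j^R|_{\cC^{pp}})$ (the first bullet is not argued separately). By contrast, you work inside $\Mod\,\cC$ using the gluing triangle $jj^R\to\id\to ii^L$ to get $M\cong i(i^L M)$ whenever $j^R M=0$, and then verify by hand that $i^L M$ is pseudo-perfect via surjectivity of $i^L$ on objects. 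Both routes are sound: the paper's is slicker but presupposes comfort with $\Hom$ out of a dg-categorical pushout; yours is more explicit, reuses the recollement triangle that the paper anyway invokes later, and makes transparent where the pseudo-perfectness of $i^L M$ comes from (including the retract remark for the idempotent-completed case, which the paper's formulation leaves implicit).
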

\begin{proof}
For the second statement:  
$$\cS^{pp} = \Hom(\cS, \Perf(\bK)) = 
\Hom(\cC \oplus_{\cK} 0, \Perf(\bK)) =
\cC^{pp} \times_{\cK^{pp}} 0.$$
\end{proof}

\begin{remark}
Note we do not claim the  map $\cC^{pp}/i(\cS^{pp}) \to \cK^{pp}$  is fully faithful. 
\end{remark}

\begin{corollary} \label{jRkernel} 
Assume $\cC$ is smooth and
proper, so $\cC^{pp} = \cC$.  Then the 
kernel of the map 
$$\cC \xrightarrow{j^R} \cK^{pp} \to \cK^{pp} / \cK$$
is generated by $\cK$ and $\cC \cap i(\cS)$.
\end{corollary}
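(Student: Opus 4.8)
We write $\cD\subseteq\cC$ for the subcategory in question, i.e.\ the full subcategory of objects $c\in\cC=\cC^{pp}$ such that $j^R(c)$ lies in $\cK\subseteq\cK^{pp}$. The composite $\cC^{pp}\xrightarrow{j^R}\cK^{pp}\to\cK^{pp}/\cK$ is well defined and exact (for $c\in\cC^{pp}$ the module $j^R(c)$ is pseudo-perfect over $\cK$, since $\Hom_{\cK}(k,j^R(c))=\Hom_{\cC}(j(k),c)$ is perfect), so $\cD$ is a thick subcategory of $\cC$. The easy containment $\langle\cK,\cC\cap i(\cS)\rangle\subseteq\cD$ is then immediate: for $k\in\cK$ we have $j^R(k)=k\in\cK$ using $j^Rj=1_{\Mod\,\cK}$; and $\cC\cap i(\cS)=\ker(j^R|_{\cC})$ by \Cref{ppexact} (recall $\cC^{pp}=\cC$, and $i(\cS^{pp})$ is exactly $\cC\cap i(\cS)$ because $i$ preserves colimits, having the right adjoint $i^R$, hence $i^L$ preserves compact objects), so $j^R$ — a fortiori its composite to $\cK^{pp}/\cK$ — vanishes on $\cC\cap i(\cS)$.

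For the reverse containment, the plan is to peel off the $\cK$-part of a given $c\in\cD$ using the counit $\varepsilon_c\colon jj^R(c)\to c$ of the adjunction $j\dashv j^R$. Since $c\in\cD$ we have $j^R(c)\in\cK$, and as $j$ restricts on $\cK$ to the inclusion $\cK\hookrightarrow\cC$ this gives $jj^R(c)\in\cK\subseteq\cC$; hence $c':=\Cone(\varepsilon_c)\in\cC$. Now apply $j^R$ to the triangle $jj^R(c)\xrightarrow{\varepsilon_c}c\to c'\to$. Because $j$ is fully faithful the unit $1_{\Mod\,\cK}\to j^Rj$ is an isomorphism, so the triangle identity $j^R(\varepsilon)\circ\eta_{j^R}=1_{j^R}$ shows $j^R(\varepsilon_c)$ is an isomorphism, whence $j^R(c')=0$ and $c'\in\ker(j^R|_{\cC})=\cC\cap i(\cS)$. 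The triangle $jj^R(c)\to c\to c'\to$ then exhibits $c$ as an extension of the object $c'\in\cC\cap i(\cS)$ by the object $jj^R(c)\in\cK$, so $c\in\langle\cK,\cC\cap i(\cS)\rangle$, completing the proof.

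The argument is short given the preceding lemmas, and I do not anticipate a serious obstacle. The one point deserving care is the identity $\cC\cap i(\cS)=\ker(j^R|_{\cC})$: one inclusion is \Cref{ppexact}, but for the other one must check that an object $c\in\cC$ with $j^R(c)=0$ actually lies in $i(\cS)$ — i.e.\ that $i^L(c)$ is a \emph{compact} object of $\Mod\,\cS$ — which is why it matters that $i$ admits the right adjoint $i^R$ and hence $i^L$ preserves compactness; without this one would only get $\cC\cap i(\Mod\,\cS)$. The conceptual content of the proof is simply that $\cD$ is larger than $\ker(j^R|_{\cC})$ by ``exactly a copy of $\cK$,'' and that this extra copy is split off by the counit $jj^R(c)\to c$.
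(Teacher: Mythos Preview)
Your proof is correct and follows essentially the same route as the paper. Both arguments use the counit triangle $jj^R(c)\to c\to c'$ to split off the $\cK$-part of an object in the kernel; the paper elides this step entirely, writing only ``After Lemma~\ref{ppexact}, the only thing remaining to check is $i(\cS^{pp})=\cC\cap i(\cS)$,'' which presupposes exactly the reduction you spell out. The one small difference is in how the identity $\ker(j^R|_\cC)=\cC\cap i(\cS)$ (equivalently $i(\cS^{pp})=\cC\cap i(\cS)$) is justified: you argue that $i^L$ preserves compacts because $i$ has the further right adjoint $i^R$, whereas the paper appeals directly to smoothness of $\cS$ (for $\cS^{pp}\subset\cS$) and properness of $\cC$ (for the reverse inclusion). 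Your argument is arguably cleaner, since it makes transparent that only the existence of $i^R$ is needed at this step.
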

\begin{proof}
    After Lemma \ref{ppexact}, the only thing remaining to check is $i(\cS^{pp}) = \cC \cap i(\cS)$.
    Smoothness of $\cC$ implies smoothness of $\cS$, 
    hence $\cS^{pp} \subset \cS$, giving 
    the inclusion $\subset$.  
    On the other hand for $s \in \cS$ satisfies 
    $i(s) \in \cC$, then for $c \in \cC$ we have 
    $$\Hom_{\cS}(i^L(c), s) = \Hom_{\cC}(c, i(s))$$
    by properness of $\cC$,  this Hom is perfect.  
    But $i^L$ is surjective, so $s \in \cS^{pp}$. 
\end{proof}


\begin{proof}[Proof of Theorem \ref{rab formula}]
Consider the category $\lb\cC, \Mod\, \cS\rb$ generated by $\cC$ and $\Mod\, \cS$ in $\Mod \,\cC$. Since $j^R$ kills $\Mod\, \cS$, we have an induced functor $\lb \cC, \Mod\,\cS\rb\rightarrow \cK^{pp}$. The kernel is generated by $\Mod \, \cS$, and we have a map 
\begin{equation}
    [j_R]\colon \lb \cC, \Mod\cS\rb/\Mod \cS \to \cK^{pp}.
\end{equation}
As $[j_R]$ can be embedded into an equivalence $\Mod \, \cC/\Mod\, \cS\cong \Mod\cK$, it is in particular
fully faithful.  Hence we get an equivalence:

\begin{equation}
    \lb \lb \cC, \Mod\cS\rb/\Mod \cS\rb/\cK\cong  \widehat \cS_\infty\subset \cK^{pp}/\cK.
\end{equation}

Consider the embedding
\begin{equation}
    \lb \cC, \Mod\cS\rb/\Mod \cS\hookrightarrow \Mod \cK\hookrightarrow\Mod\cC.
\end{equation}
given by $jj^R$.  We use 
the same notation after passing to the quotient by $\cK$: 
\begin{equation}
    jj^R\colon \lb\lb \cC, \Mod\cS\rb/\Mod \cS\rb/\cK\hookrightarrow\Mod\cC/\cK
\end{equation}

Thus far we have shown
$$\Hom_{\widehat \cS_\infty}(c, d)  = 
\Hom_{\Mod \cC/\cK}(jj^R(c), jj^R(d))
$$

Since we have an exact triangle
\begin{equation}
    jj^R \rightarrow\id\rightarrow ii^L\rightarrow,
\end{equation}
we have
\begin{equation}
\begin{split}
    \Hom_{\Mod \cC/\cK}(jj^R(c), jj^R(d))&\cong \Cone( C_1\rightarrow C_2)[-1]
\end{split}
\end{equation}
where
\begin{equation}
    \begin{split}
         C_1&:=\Cone(\Hom_{\Mod \cC/\cK}(ii^L(c),d)\rightarrow\Hom_{\Mod \cC/\cK}(c, d) )\\
         C_2&:=\Cone(\Hom_{\Mod \cC/\cK}(ii^L(c),ii^L(d))\rightarrow \Hom_{\Mod \cC/\cK}(c,ii^L(d))).
    \end{split}
\end{equation}

By \eqref{indckLL}, we see $C_2 = 0$.
To complete the proof we rewrite 
$C_1$ using \eqref{indckL_}  and 
\eqref{indck__}.  
\end{proof}

\appendix

\section{Compositions in $\widehat{S}_\infty$} \label{compositions}

Let $c_0,c_1,c_2$ be objects of $\cC$, viewed also as objects of $\widehat \cS_{\infty}$. We express the underlying complex of $\Hom_{\widehat\cS_\infty}(c_i,c_{i+1})$  as 
\begin{equation}
\Hom_{\Mod\, \cC}(ii^L(c_i), c_{i+1})[1]\oplus\Hom_{\Mod\, \cC}(ii^L(c_i), ii^L(c_{i+1}))).
\end{equation}
We will use the unit morphism
\begin{equation}
    u\colon c_i\rightarrow ii^L(c_i).
\end{equation}

We will compose
\begin{equation}
\begin{split}
(f_0,g_0)&\in \Hom_{\Mod\, \cC}(ii^L(c_0), c_{1})[1]\oplus\Hom_{\Mod\, \cC}(ii^L(c_i), ii^L(c_{i+1})))\\
    (f_1,g_1)&\in \Hom_{\Mod\, \cC}(ii^L(c_1), c_{2})[1]\oplus\Hom_{\Mod\, \cC}(ii^L(c_i), ii^L(c_{i+1}))).
\end{split}
\end{equation}

We use the notation from the proof of Theorem \ref{rab formula}. We have the projection
\begin{equation}
    \pi\colon \Cone(C_1\rightarrow C_2)[-1]\rightarrow C_1,
\end{equation}
which is a quasi-isomorphism. For each $(f_i,g_i)$, we have a cocycle lift
\begin{equation}
\begin{split}
    &(f_i,g_i, u\circ g_i\circ u^{-1}, 0)\\
    &\in \Hom_{\Mod \cC/\cK}(ii^L(c_i),c_{i+1})[-1]\oplus \Hom_{\Mod \cC/\cK}(c_i, c_{i+1})\\
    &\oplus \Hom_{\Mod \cC/\cK}(ii^L(c_i),ii^L(c_{i+1}))[-2]\oplus \Hom_{\Mod \cC/\cK}(c_i,ii^L(c_{i+1})))[-1],
\end{split}
\end{equation}
which is the underlying vector space of $\Cone(C_1\rightarrow C_2)$, which is the underlying vector space of the hom-space $\Hom(\Cone(c_i\rightarrow ii^L(c_i)), \Cone(c_{i+1}\rightarrow ii^L(c_{i+1})))$. Here $g_i\circ u^{-1}$ is only cohomologically well-defined. 
We then directly calculate and get
\begin{equation}
    \begin{split}
        &(f_1,g_1, u\circ g_1\circ u^{-1}, 0)\circ (f_0,g_0, u\circ g_0\circ u^{-1}, 0)\\
        &=(g_1\circ f_0+f_1\circ u\circ g_0\circ u^{-1}, g_1\circ g_0 ,\star_1, \star_2),
    \end{split}
\end{equation}
where the last two components are omitted. 

We interpret each term as a morphism of $\Mod\, \cC$. By taking the following identification, $u^{-1}$ disappears: 
\begin{equation}
\begin{split}
    &(f_i,g_i, g_i, 0)\\
    &\in \Hom_{\Mod \cC}(ii^L(c_i),c_{i+1})[-1]\oplus \Hom_{\Mod \cC}(ii^L(c_i), ii^L(c_{i+1}))\\
    &\oplus \Hom_{\Mod \cC}(ii^L(c_i),ii^L(c_{i+1}))[-2]\oplus \Hom_{\Mod \cC}(ii^L(c_i),ii^L(c_{i+1})))[-1].
\end{split}
\end{equation}
Then the terms in
\begin{equation}
(g_1\circ u\circ f_0+f_1\circ g_0, g_1\circ g_0)
\end{equation}
are well-defined 
except for $g_1\circ f_0$ lands in the correct place $\Hom_{\Mod \cC}(ii^L(c_i),c_{i+1})[-1]\oplus \Hom_{\Mod \cC}(ii^L(c_i), ii^L(c_{i+1}))$. Here we put $u$ the head of two $f_0$, which also comes from the identification with $\Mod\, \cC$. 

A priori, $g_1\circ u\circ f_0$ is not in $\Hom_{\Mod \cC}(ii^L(c_i),c_{i+1})[-1]$, but $\Hom_{\Mod \cC}(ii^L(c_i),ii^L(c_{i+1}))[-1]$. But, by  construction, there is some $u^{-1}\circ g_1\circ u\circ f_0\in\Hom_{\Mod \cC}(ii^L(c_i),c_{i+1})[-1] $ such that $u\circ (u^{-1}\circ g_1\circ f_0)=g_1\circ u\circ f_0$. Hence, at the cohomological level, we obtain the following formula for the composition:
\begin{equation}
    (f_1, g_1)\circ (f_0,g_0):=(u^{-1}\circ g_1\circ u\circ f_0+f_1\circ g_0, g_1\circ g_0).
\end{equation}

One way to write formulas beyond the cohomological level would be the following.  Choose a projection $C_1\rightarrow H^*(C_1)$ and the splitting of $\Cone(C_1\rightarrow C_2)[1]\rightarrow H^*(C_1)$, one obtains the contracting homotopy from $\Cone(C_1\rightarrow C_2)[1]$ to $H^*(C_1)$. Then, by running the homological perturbation theory, one obtains an $A_\infty$-structure upgrading the above composition formula, which is by construction quasi-equivalent to $\widehat{\cS}_\infty$.

\section{$\Hom_{\bK[t]}(\bK[t, t^{-1}], \bK[t])$} \label{ridiculous hom calculation}

 A free resolution of $\bK[t, t^{-1}]$
 is given by: 
\begin{eqnarray*}
 \bigoplus_{n \leq -1} \bK[t] \cdot r_n & \to &  \bigoplus_{n \leq 0} \bK[t] \cdot s_n \\
    r_n & \mapsto & t s_n - s_{n+1} \\
\end{eqnarray*} 
where $r_n, s_n$ are just basis elements.  Dualizing gives 
\begin{eqnarray*}
    \prod_{n \leq 0} \bK[t] \cdot s_n^* &\to &     \prod_{n \leq -1} \bK[t] \cdot r_n^* \\
    s_n^* & \mapsto & t r_n^*-  r_{n-1}^* 
\end{eqnarray*}

Consider the following $\bK[t]$-linear map
\begin{equation}
    \Sigma\colon \prod_{n\leq -1}\bK[t]r_n^*\rightarrow \bK[[t]]; r_n^*\mapsto t^{-n-1}.
\end{equation}
We claim that 
\begin{equation}
\prod_{n \leq -1} \bK[t] \cdot s_n^* \to     \prod_{n \leq -1} \bK[t] \cdot r_n^* \rightarrow \bK[[t]]\rightarrow 0
\end{equation}
is an exact sequence.  Indeed, it is obvious that the composition is zero. Suppose $\prod f_n(t)r_n^*$ goes to zero. For each monomial $\alpha r_n^*$  of $\prod f_n(t)r_n^*$, we set $$\deg (\alpha r_n^*):=\deg (\alpha)-n-1.$$
Let $N$ be the lowest nonzero number where $\prod f_n(t)r_n^*$ has a nonzero degree $N$ monomial. Note that the number of degree $N$ monomials in $\prod f_n(t)r_n^*$ are finite. Hence, by adding an element coming from $\prod_{n \leq -1} \bK[t] \cdot s_n^*$, one can assume that the sum of the degree $N$ monomials is $\beta r_{-N-1}^*$ for some scalar $\beta$. Since this is still in the kernel of $\Sigma$ and the degree $N$-part of $\Sigma(\beta r_{-N-1}^*)=\beta t^{N}$, $\beta$ is zero. Inductively, adding elements coming from $\prod_{n \leq -1} \bK[t] \cdot s_n^*$, we get $\ker \Sigma=\prod_{n \leq -1} \bK[t] \cdot s_n^*$.

Hence 
\begin{equation}
\prod_{n \leq 0} \bK[t] \cdot s_n^* \to     \prod_{n \leq -1} \bK[t] \cdot r_n^* \rightarrow \bK[[t]]/\bK[t]\rightarrow 0
\end{equation}
is also an exact sequence.  
(It is also easy to see that the first map is injective.) 

\subsection*{Acknowledgment}
We would like to thank Adrian Petr
for some questions 
about the Rabinowitz Fukaya category. We would like to thank the anonymous referees for pointing out a missing assumption in the main theorem.
The first-named author's work is supported by JSPS KAKENHI Grant Numbers 22K13912, 20H01794, 	23H01068.
The second-named author's work is 
supported Novo Nordisk Foundation grant NNF20OC0066298, 
Villum Fonden Villum Investigator grant 37814, and Danish National Research Foundation grant DNRF157.

\bibliographystyle{plain}
\bibliography{bibs}

\end{document}